\newtheorem{theorem}{Theorem}[section]
\newtheorem{lemma}{Lemma}[section]
\newtheorem{definition}{Definition}[section]
\newtheorem{remark}{Remark}[section]
\newcounter{romnum}
\begin{document}

\title{Lyapunov-Type Inequalities for a Fractional Boundary Value Problem with a Fractional Boundary Condition}

\author{Sougata Dhar$^1$ and Jeffrey T. Neugebauer$^2$}

\keywords{fractional boundary value problem,\,Lyapunov-type inequalities,\,Green's function,\,contraction mapping theorem\,uniqueness and existence of solutions. }

\subjclass{34A08,\,26A33,\,26D15, \,70K75.}

\address{$^1$ Department of Mathematics, University of Connecticut, Storrs, CT 06269 USA \\ $^2$ Department of Mathematics and Statistics, Eastern Kentucky University, Richmond, KY 40475 USA}

\def\R{\mathbb{R}}
\def\N{\mathbb{N}}
\def\P{\mathcal{P}}
\def\O{\Omega}
\def\e{\epsilon}
\def\p{\prime}
\def\d{\delta}
\def\ds{\mathrm{d}s}
\def\a{\alpha}
\def\B{\beta}
\def\Da{D_{a^+}^\alpha}
\def\Db{D_{a^+}^\beta}
\def\Du{D_{0^+}^\mu}
\def\C01{C^{(j-1)}[0,1]}
\def\pj1 {\dfrac{\partial^{n-2}}{\partial t^{n-2}}}
\def\pjj1 {\dfrac{\partial^{j}}{\partial t^{j}}}
\def\fus{f(u(s), u'(s), \dots , u^{(n-2)}(s))}
\def\fut{f(u(t), u'(t), \dots , u^{(n-2)}(t))}
\def\fusj{f(u(s), u'(s), \dots , u^{(j)}(s))}
\def\M{\textit{\textbf{M}}}
\def\ds{\displaystyle}

\begin{abstract} 
In this paper, we consider a linear fractional differential equation with fractional boundary conditions.
First, by obtaining Green's function, we derive the Lyapunov-type inequalities for
such boundary value problems. Furthermore, we use the contraction mapping
theorem to study the existence of a unique solution to a nonlinear problem.
\end{abstract}
\maketitle

\setcounter{equation}{0}
\section{Introduction}

For the second-order linear differential equation
\begin{equation}
u''+q(t)u=0,\quad t\in(a,b)\label{1.1}
\end{equation}
with $q\in C([a,b],\mathbb{R})$, it is known that if \eqref{1.1} has a nontrivial solution $u$ with $u(a)=u(b)=0$, then
\begin{equation}
\int_{a}^{b}\left|q\left(t\right)\right|dt>\frac{4}{b-a}.\label{1.2}
\end{equation}
This result is known as the Lyapunov inequality, see \cite{AML, B}.

It was first noticed by Wintner \cite{W} and later by several other
authors that inequality (\ref{1.2}) can be improved by replacing
$|q(t)|$ by $q_{+}(t):=\max\{q(t),0\}$, the nonnegative part of
$q(t)$.

The Lyapunov inequality was extended by Hartman \cite[Chapter XI]{H}
to the more general equation
\begin{equation}
(r(t)u')'+q(t)u=0,\label{1.4}
\end{equation}
where $q,r\in C\left(\left[a,b\right],\mathbb{R}\right)$ when it was shown that if (\ref{1.4}) has a nontrivial solution $u$
satisfying $u(a)=u(b)=0$ and $u(t)\neq0$ for $t\in(a,b)$, then
\[
\int_{a}^{b}q_{+}\left(t\right)dt>\frac{4}{\int_{a}^{b}r^{-1}\left(t\right)dt}.
\]

These Lyapunov inequalities have been used as an important tool in oscillation,
disconjugacy, control theory, eigenvalue problems, and many other areas of differential
equations. Due to their importance in applications, they have been extended in
various directions by many authors.
For more on Lyapunov-type inequalities, we refer the reader to
\cite{DK1,DK2,DK3,DK4,DK5,DK6,DK7,DK8,DK9,DK10} and the references cited therein.

Recently, fractional differential equations
have gained a considerable attention for their applications in the mathematical modeling of systems and
processes in the fields of physics, mechanics, chemistry, aerodynamics, nonlinear dynamics, and system theory \cite{BS16, CV18, DNV11, DR18}.
Due to useful applications in the boundary value problems (BVPs),
a subsequent search for the Lyapunov-type inequalities has also
begun in the direction of fractional calculus.
Ferreira first obtained Lyapunov-type inequalities for fractional
differential equations with pointwise boundary conditions (BCs).
In \cite{F2}, he considered the Riemann-Liouville fractional
differential equation
\begin{equation}
\Da u+q(t)u=0,\quad 1<\alpha\le 2,\label{1.5}
\end{equation}
where $q\in C([a,b],\mathbb{R})$, and showed that if \eqref{1.5} has a nontrivial solution $u$
satisfying $u(a)=u(b)=0$, then
\begin{equation}
\int_{a}^{b}|q(t)|dt>\Gamma(\alpha)\Big(\frac{4}{b-a}\Big)^{\alpha-1}.\label{1.6}
\end{equation}
In \cite[Theorem 2.3]{DK5}, Dhar and Kong improved \eqref{1.6} by replacing
$|q(t)|$ by $q_+(t)$. Moreover, they
obtained the Lyapunov-type inequalities for a fractional BVP
consisting of Eq.~\eqref{1.5} and the integral BCs
\begin{equation}
I_{a^{+}}^{2-\alpha}u(a^{+})=I_{a^{+}}^{2-\alpha}u(b)=0,\label{1.6*}
\end{equation}
where $I_{a^{+}}^{2-\alpha}u$ is the Riemann-Liouville fractional integral of
$u(t)$ of order $2-\alpha$.

When $\alpha=2$, the results in \cite{F2} and \cite{DK5}
lead to the classical Lyapunov inequality.
For more Lyapunov-type inequalities involving the Riemann-Liouville and
Caputo fractional derivatives, we refer the reader to
\cite{F1, JS, RB} and the references cited therein.

In this paper, we consider a Riemann-Liouville fractional BVP consisting of the
equation
\begin{equation*}
\Da u + q(t)u=0,
\end{equation*}
together with the boundary conditions (BCs)
\begin{equation*}
u(a)=0,\quad \Db u(b)=0,
\end{equation*}
where $\alpha\in(1,2]$,
$\beta\in[0,\alpha-1]$, $\Da$, $\Db$ are Riemann-Liouville derivatives of order
$\alpha$ and $\beta$, respectively, and $q\in C([a,b],\mathbb{R})$.
We obtain Lyapunov-type inequalities and use them to study the
nonexistence of a nontrivial solution of certain BVPs. Furthermore, by
using the contraction mapping theorem, we also establish a criterion
for the existence of a nontrivial solution for a nonlinear fractional BVP.

This paper is organized as follows. After this introduction, we recall
some basic definitions of fractional calculus in Section 2.
Section 3 contains the main results regarding the Lyapunov-type
inequalities. Finally, in Section 4, we obtain a criterion for the nonexistence of nontrivial solutions of a linear BVP and the existence of a unique
solution of a nonlinear fractional BVP.

\section{Background Materials and Preliminaries}

For the convenience of the reader, here we present the necessary definitions
and lemmas from fractional calculus theory in the sense of Riemann-Liouville.
These results can be found in the books \cite{KST, MR, P, SKM}.

\begin{definition}\label{d1}
Let $\nu>0$. The Riemann-Liouville fractional integral of the function $u:[a,b]\to\R$ of
order $\nu$, denoted $I_{a^+}^{\nu} u$, is defined as
\begin{equation*}
I_{a^+}^{\nu} u(t)=\dfrac{1}{\Gamma (\nu)}\int^t_a (t-s)^{\nu-1}u(s)ds,
\end{equation*}
where $\Gamma(\nu)=\int_{0}^{\infty}t^{\nu-1}e^{-t}dt$ is the
gamma function, provided the right-hand side is pointwise defined on $\mathbb{R}^{+}$.
\end{definition}

\begin{definition}\label{d2}
Let $n$ denote a positive integer and assume $n-1< \alpha \le n$.
The Riemann-Liouville fractional derivative of order $\alpha$ of the function
$u:[a,b]\to\R$, denoted $\Da u$, is defined as
\begin{equation*}
\Da u(t)=\dfrac{1}{\Gamma(n-\alpha)}\dfrac{d^n}{dt^n}\int^t_a (t-s)^{n-\alpha-1}u(s)ds=D^{n}I_{a^+}^{n-\alpha} u(t),
\end{equation*}
provided the right-hand side is pointwise defined on $\mathbb{R}^{+}$.
\end{definition}

In the following, unless otherwise mentioned, we use $ D_{a+}^{\alpha} u(t)$ to denote the
fractional derivative of $u(t)$ with order $\alpha$ and $D^{j}u(t)$ to denote the classical
derivative of order $j$ of $u(t)$ with $j$ being a nonnegative integer.
We recall a few well-known properties of the Riemann-Liouville fractional
derivatives and integrals to construct and analyze the family of Green's functions.
Let $u\in L_{1}[a,b]$. Then
\begin{equation}\label{2.1}
I_{a+}^{\nu_{1}}I_{a+}^{\nu_{2}}u(t)=I_{a+}^{\nu_{1}+\nu_{2}}u(t)=I_{a+}^{\nu_{2}}I_{a+}^{\nu_{1}}u(t), \quad \nu_{1}, \nu_{2} >0;
\end{equation}
\begin{equation}\label{2.2}
D_{a+}^{\nu_{1}}I_{a+}^{\nu_{2}}u(t)=I_{a+}^{\nu_{2}-\nu_{1}}u(t), \quad \mbox{ if } 0\le \nu_{1} \le \nu_{2};
\end{equation}
\begin{equation*}
 D_{a+}^{\alpha}I_{a+}^{\alpha}u(t) =u(t);
\end{equation*} and
\begin{equation}\label{2.3}
I_{a+}^{\alpha}D_{a+}^{\alpha}u(t) = u(t)+\sum_{i=1}^{n}c_{i}(t-a)^{\alpha -n +(i-1)},
\end{equation}
where $c_i\in \R$ for $1\le i\le n$.
The property \eqref{2.1} is referred to as the semigroup property for the fractional integral.

It follows from Definition \ref{d1} and \ref{d2} that
$$
I_{a+}^{\nu_{2}}(t-a)^{\nu_{1}}=\frac{\Gamma (\nu_{1} +1)}{\Gamma (\nu_{2} +\nu_{1} +1)}(t-a)^{\nu_{2} +\nu_{1}},\quad \nu_{1} > -1, \nu_{2} \ge 0,
$$
and
\begin{equation}\label{2.4}
D_{a+}^{\nu_{2}}(t-a)^{\nu_{1}} =\frac{\Gamma (\nu_{1} +1)}{\Gamma (\nu_{1}+1 -\nu_{2})}(t-a)^{\nu_{1}-\nu_{2}}, \quad \nu_{1} > -1, \nu_{2} \ge 0,
\end{equation}
where it is assumed that $\nu_{2} -\nu_{1}$ is not a positive integer. If $\nu_{2} -\nu_{1}$ is a positive integer, then the right-hand side of \eqref{2.4} vanishes. To see this, appeal to the convention that $\frac{1}{\Gamma (\nu_{1}+1 -\nu_{2})}=0$ if $\nu_{2} -\nu_{1}$ is a positive integer.

\section{Main Results}

We now consider the fractional boundary value problem consisting of the differential equation
\begin{equation}\label{1}
\Da u + q(t)u=0,~t\in[a,b],
\end{equation}
together with the boundary conditions
\begin{equation}\label{2}
u(a)=0,\quad \Db u(b)=0,
\end{equation}
where $\alpha\in(1,2]$,
$\beta\in[0,\alpha-1]$, $\Da$, $\Db$ are Riemann-Liouville derivatives of order
$\alpha$ and $\beta$, respectively, and $q\in C([a,b],\mathbb{R})$. First, we present Green's function corresponding to the BVP \eqref{1}, \eqref{2}.

\begin{lemma}\label{l3.1}
Let $h\in C([a,b],\mathbb{R})$, $\alpha\in(1,2]$, and $\beta\in[0,\alpha-1]$.
Then the unique solution of the BVP consisting of the equation
\begin{equation}\label{lde}
\Da u+h(t)=0,\quad t\in[a,b],
\end{equation}
and the BCs \eqref{2} is
\begin{equation*}
u(t)= \int_{a}^{b}G(t,s)h(s)ds,\quad t\in[a,b],
\end{equation*}
where
\begin{equation}\label{gf}
G(t,s)=\frac{1}{\Gamma(\alpha)}\left\{\begin{array}{lr}\frac{(t-a)^{\alpha -1}(b-s)^{\alpha-1-\beta}}{(b-a)^{\alpha-1-\beta}}-(t-s)^{\alpha-1}, & a\le s \le t\le b,\\\\
\frac{(t-a)^{\alpha -1}(b-s)^{\alpha-1-\beta}}{(b-a)^{\alpha-1-\beta}}, & a\le t\le s\le b.\end{array}\right.
\end{equation}
\end{lemma}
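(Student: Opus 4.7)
The plan is to represent the solution of the nonhomogeneous linear equation by applying the fractional integration identity \eqref{2.3}, pin down the two integration constants using the two boundary conditions, and then read off the Green's function kernel.

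First, I would apply $I_{a^+}^\alpha$ to both sides of \eqref{lde}. Since $\alpha \in (1,2]$, in identity \eqref{2.3} we have $n=2$, so
\[
u(t) = -I_{a^+}^\alpha h(t) + c_1 (t-a)^{\alpha-1} + c_2 (t-a)^{\alpha-2}
\]
for some constants $c_1,c_2 \in \mathbb{R}$. Because $\alpha-2 < 0$ when $\alpha<2$ (and the term is a nonzero constant when $\alpha=2$), the condition $u(a)=0$ forces $c_2=0$; indeed, the first integral term vanishes at $t=a$, while the $(t-a)^{\alpha-2}$ summand is unbounded (or nonzero) at $t=a$ unless $c_2=0$. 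This reduces the representation to $u(t) = c_1 (t-a)^{\alpha-1} - I_{a^+}^\alpha h(t)$.

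Next, I would compute $\Db u$ using the two building blocks from Section 2. By \eqref{2.4},
\[
\Db (t-a)^{\alpha-1} = \frac{\Gamma(\alpha)}{\Gamma(\alpha-\beta)}(t-a)^{\alpha-1-\beta},
\]
and by \eqref{2.2}, applicable because $0 \le \beta \le \alpha-1 < \alpha$,
\[
\Db I_{a^+}^\alpha h(t) = I_{a^+}^{\alpha-\beta} h(t).
\]
Thus
\[
\Db u(t) = c_1 \frac{\Gamma(\alpha)}{\Gamma(\alpha-\beta)}(t-a)^{\alpha-1-\beta} - \frac{1}{\Gamma(\alpha-\beta)}\int_a^t (t-s)^{\alpha-\beta-1} h(s)\,ds.
\]
Imposing $\Db u(b) = 0$ and solving for $c_1$ gives
\[
c_1 = \frac{1}{\Gamma(\alpha)(b-a)^{\alpha-1-\beta}}\int_a^b (b-s)^{\alpha-\beta-1} h(s)\,ds.
\]

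Finally, substituting this $c_1$ back into the expression for $u(t)$ and splitting the integral over $[a,b]$ into pieces on $[a,t]$ and $[t,b]$ produces exactly the kernel $G(t,s)$ displayed in \eqref{gf}. Uniqueness follows because the two boundary conditions uniquely determined the constants $c_1,c_2$. The only subtle step is the justification that $c_2=0$ from $u(a)=0$ when $\alpha \in (1,2)$; one must interpret $u(a)=0$ in the appropriate sense (continuity of the candidate solution at $t=a$, or equivalently $\lim_{t \to a^+} u(t) = 0$), after which the argument is otherwise routine verification using \eqref{2.1}–\eqref{2.4}.
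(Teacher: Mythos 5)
Your proposal is correct and follows essentially the same route as the paper's proof: reduce \eqref{lde} to an integral equation via \eqref{2.3}, use $u(a)=0$ to kill the $(t-a)^{\alpha-2}$ term, apply $\Db$ with \eqref{2.2} and \eqref{2.4} to determine the remaining constant from $\Db u(b)=0$, and split the resulting integral to read off $G(t,s)$. Your added remark justifying why the coefficient of $(t-a)^{\alpha-2}$ must vanish (unboundedness at $t=a$ for $\alpha<2$) is actually slightly more careful than the paper, which asserts this step without comment.
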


\begin{proof}
We use \eqref{2.3} to reduce \eqref{lde} to an equivalent integral equation
$$
u(t)= -I_{a+}^{\alpha}h(t)+c_1(t-a)^{\alpha-2}+c_2(t-a)^{\alpha-1}.
$$
The BC $u(a)=0$ implies $c_1=0$,
and hence
$$
u(t)= -I_{a+}^{\alpha}h(t)+c_{2}(t-a)^{\alpha -1}.
$$
Note that $0\le \beta<\alpha$. Applying $\Db$ on both sides and using
\eqref{2.2} and \eqref{2.4}, we have
\begin{align*}
\Db u(t)&=-\Db \left(I_{a+}^{\alpha} h(t)+c_{2}(t-a)^{\alpha -1}\right)\\
&= -I_{a+}^{\alpha-\beta} h(t)+c_2\frac{\Gamma(\alpha)}{\Gamma(\alpha-\beta)}(t-a)^{\alpha-1-\beta}.
\end{align*}
Since $\Db u(b)=0$, it is easy to see that
$$
c_2=\frac{1}{\Gamma(\alpha)(b-a)^{\alpha-1-\beta}}\int_a^b(b-s)^{\alpha-1-\beta}h(s)ds.
$$
Therefore, the unique solution of problem \eqref{lde}, \eqref{2} is
\begin{align*}
u(t)&=\frac{-1}{\Gamma(\alpha)}\int_a^t(t-s)^{\alpha-1}h(s)ds+\frac{(t-a)^{\alpha-1}}{\Gamma(\alpha)(b-a)^{\alpha-1-\beta}}\int_a^b(b-s)^{\alpha-1}h(s)ds\\
&=\frac{1}{\Gamma(\alpha)}\int_a^t\left\{\frac{(t-a)^{\alpha-1}(b-s)^{\alpha-1}}{(b-a)^{\alpha-1-\beta}}-(t-s)^{\alpha-1}\right\}h(s)ds\\
&\phantom{=}+\frac{1}{\Gamma(\alpha)}\int_t^b\frac{(t-a)^{\alpha-1}(b-s)^{\alpha-1}}{(b-a)^{\alpha-1-\beta}}h(s)ds\\
&=\int_a^bG(t,s)h(s)ds.
\end{align*}
The proof is complete.
\end{proof}

\begin{lemma}\label{green1}
Green's function $G(t,s)$ given in
\eqref{gf} satisfies the following properties.
\begin{enumerate}
\item $G(t,s)\ge0$ for $(t,s)\in[a,b]\times[a,b]$.
\item $\max_{t\in[a,b]}G(t,s)\le G(s,s)$ for $s\in[a,b]$.
\item $G(s,s)$ has a unique maximum at $s^*=\frac{(\alpha-1)b+(\alpha-1-\beta)a}{2\alpha-2-\beta}$ given by
\begin{equation}\label{3.4}
G(s^*,s^*)=\dfrac{1}{\Gamma(\alpha)}\left(\frac{(b-a)(\alpha-1)}{2\alpha-2-\beta}\right)^{\alpha-1}\left(\frac{\alpha-1-\beta}{2\alpha-2-\beta}\right)^{\alpha-1-\beta}.
\end{equation}
\end{enumerate}
\end{lemma}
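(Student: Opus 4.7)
The plan is to prove the three properties in order, fixing $s\in[a,b]$ in (1) and (2) and viewing $G(t,s)$ as a function of $t$, then treating $G(s,s)$ as a function of $s$ alone in (3).

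For (1), I would split on the two branches in \eqref{gf}. On $a\le t\le s\le b$ all factors are nonnegative, so $G(t,s)\ge 0$ is immediate. On $a\le s\le t\le b$, nonnegativity reduces to
\[
(t-a)^{\alpha-1}(b-s)^{\alpha-1-\beta}\ge (t-s)^{\alpha-1}(b-a)^{\alpha-1-\beta}.
\]
Since $s\le t\le b$, the function $x\mapsto \frac{x}{x-s}=1+\frac{s-a}{\,x-s\,}$ (after a shift) is decreasing in $x>s$, so $\frac{t-a}{t-s}\ge \frac{b-a}{b-s}\ge 1$. Then from $\alpha-1\ge \alpha-1-\beta\ge 0$ and the fact that $\frac{b-a}{b-s}\ge 1$, I get $\bigl(\tfrac{t-a}{t-s}\bigr)^{\alpha-1}\ge \bigl(\tfrac{b-a}{b-s}\bigr)^{\alpha-1}\ge \bigl(\tfrac{b-a}{b-s}\bigr)^{\alpha-1-\beta}$, which is exactly the required inequality.

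For (2), the two expressions in \eqref{gf} agree at $t=s$, giving $G(s,s)=\frac{(s-a)^{\alpha-1}(b-s)^{\alpha-1-\beta}}{\Gamma(\alpha)(b-a)^{\alpha-1-\beta}}$. On $[a,s]$ the function $t\mapsto G(t,s)$ is just $(t-a)^{\alpha-1}$ multiplied by a nonnegative constant, hence increasing, so its maximum on $[a,s]$ is $G(s,s)$. On $[s,b]$ I would differentiate,
\[
\frac{\partial G}{\partial t}=\frac{\alpha-1}{\Gamma(\alpha)}\left[\frac{(t-a)^{\alpha-2}(b-s)^{\alpha-1-\beta}}{(b-a)^{\alpha-1-\beta}}-(t-s)^{\alpha-2}\right],
\]
and show this is $\le 0$. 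Because $\alpha-2\le 0$ and $t-a\ge t-s$ gives $(t-a)^{\alpha-2}\le (t-s)^{\alpha-2}$, while $b-s\le b-a$ together with $\alpha-1-\beta\ge 0$ gives $(b-s)^{\alpha-1-\beta}\le (b-a)^{\alpha-1-\beta}$. Multiplying these yields the desired sign, so $G(\cdot,s)$ is nonincreasing on $[s,b]$ and the overall maximum is $G(s,s)$.

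For (3), I would differentiate $\phi(s):=(s-a)^{\alpha-1}(b-s)^{\alpha-1-\beta}$, set $\phi'(s)=0$, which simplifies to $(\alpha-1)(b-s)=(\alpha-1-\beta)(s-a)$, and solve to obtain $s^*=\frac{(\alpha-1)b+(\alpha-1-\beta)a}{2\alpha-2-\beta}$. Uniqueness and the maximum property follow because $\phi$ is nonnegative on $[a,b]$, vanishes at both endpoints (or is monotone in the degenerate case $\beta=\alpha-1$, for which $s^*=b$), and is smooth in the interior, so the unique critical point is the maximum. Substituting the explicit values $s^*-a=\frac{(\alpha-1)(b-a)}{2\alpha-2-\beta}$ and $b-s^*=\frac{(\alpha-1-\beta)(b-a)}{2\alpha-2-\beta}$ into $G(s,s)$ and cancelling one factor of $(b-a)^{\alpha-1-\beta}$ produces \eqref{3.4}.

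The only delicate step is the comparison in (1); the rest are routine. I expect the main care to be needed in keeping track of signs of exponents ($\alpha-1>0$, $\alpha-2\le 0$, $\alpha-1-\beta\ge 0$) so that every monotonicity argument points the right way, and in handling the boundary case $\beta=\alpha-1$ in (3) where $\phi$ ceases to vanish at $s=b$.
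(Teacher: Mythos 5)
Your proposal is correct and follows essentially the same route as the paper: split $G$ along the diagonal, prove monotonicity in $t$ on each branch via the comparison $\frac{t-a}{t-s}\ge\frac{b-a}{b-s}$ (the reciprocal form of the paper's $\frac{b-s}{b-a}\ge\frac{t-s}{t-a}$), and then optimize $G(s,s)$ by elementary calculus. A minor bonus: you explicitly treat the degenerate case $\beta=\alpha-1$ in part (3), where $G(s,s)$ no longer vanishes at $s=b$ and the maximizer $s^*=b$ is attained by monotonicity rather than by Rolle's theorem, a point the paper's proof glosses over.
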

\begin{proof}
Define
$$
g_1(t,s)=\frac{(t-a)^{\alpha -1}(b-s)^{\alpha-1-\beta}}{(b-a)^{\alpha-1-\beta}}-(t-s)^{\alpha-1},
$$
for $a\le s\le t\le b$ and
$$
g_2(t,s)=\frac{(t-a)^{\alpha -1}(b-s)^{\alpha-1-\beta}}{(b-a)^{\alpha-1-\beta}},
$$
for $a\le t\le s\le b$. First, we point out that
\begin{align*}
\frac{b-s}{b-a}-\frac{t-s}{t-a}&=\frac{(b-s)(t-a)-(t-s)(b-a)}{(b-a)(t-a)}\\
&=\frac{(b-t)(s-a)}{(b-a)(t-a)}\ge0,
\end{align*}
or 
$$
\frac{b-s}{b-a}\ge\frac{t-s}{t-a},
$$ 
for $a\le s\le t\le b$.

Now,
\begin{align*}
g_1(t,s)&=\frac{(t-a)^{\alpha -1}(b-s)^{\alpha-1-\beta}}{(b-a)^{\alpha-1-\beta}}-(t-s)^{\alpha-1}\\
&=(t-a)^{\alpha-1}\left(\frac{b-s}{b-a}\right)^{\alpha-1-\beta}-(t-a)^{\alpha-1}\left(\frac{t-s}{t-a}\right)^{\alpha-1}\\
&=(t-a)^{\alpha-1}\left[\left(\frac{b-s}{b-a}\right)^{\alpha-1-\beta}-\left(\frac{t-s}{t-a}\right)^{\alpha-1}\right].
\end{align*}
Since $0\le\dfrac{b-s}{b-a}\le1$, $0\le\dfrac{t-s}{t-a}\le1$, and $\alpha-1-\beta\le\alpha-1$, one has
$$
\left(\frac{b-s}{b-a}\right)^{\alpha-1-\beta}\ge \left(\frac{b-s}{b-a}\right)^{\alpha-1}\ge \left(\frac{t-s}{t-a}\right)^{\alpha-1}.
$$
So $g_1(t,s)\ge0$ for $a\le s\le t\le b$.

Now
\begin{align*}
\dfrac{\partial }{\partial t}g_1(t,s)&=(\alpha-1)\frac{(t-a)^{\alpha -2}(b-s)^{\alpha-1-\beta}}{(b-a)^{\alpha-1}}-(\alpha-1)(t-s)^{\alpha-2}\\
&=(\alpha-1)(t-a)^{\alpha-2}\left(\frac{b-s}{b-a}\right)^{\alpha-1-\beta}-(\alpha-1)(t-a)^{\alpha-2}\left(\frac{t-s}{t-a}\right)^{\alpha-2}\\
&=(\alpha-1)(t-a)^{\alpha-2}\left[\left(\frac{b-s}{b-a}\right)^{\alpha-1-\beta}-\left(\frac{t-s}{t-a}\right)^{\alpha-2}\right].
\end{align*}
Since $0\le\dfrac{b-s}{b-a}\le1$, $0\le\dfrac{t-s}{t-a}\le1$, $\alpha-1-\beta\ge\alpha-2$, and $\alpha-2\le 0$, we have $$\left(\frac{b-s}{b-a}\right)^{\alpha-1-\beta}\le \left(\frac{b-s}{b-a}\right)^{\alpha-2}\le\left(\frac{t-s}{t-a}\right)^{\alpha-2}.$$
So $\frac{\partial }{\partial t}g_1(t,s)\le0$ for $a\le s\le t\le b$. Thus $g_1(t,s)$ is a decreasing function with respect to $t$, implying $g_1(t,s)\le g_1(s,s)$ for all $t\in[s,b]$.

It is easy to see that $g_2(t,s)\ge 0$. Moreover,
$$
\dfrac{\partial }{\partial t}g_2(t,s)=(\alpha-1)\frac{(t-a)^{\alpha -2}(b-s)^{\alpha-1-\beta}}{(b-a)^{\alpha-1-\beta}}\ge 0,
$$
for $a\le t\le s\le b$. So $g_2(t,s)$ is increasing with respect to $t$ implying $g_2(t,s)\le g_2(s,s)$ for all $t\in[a,s]$. Thus (1) and (2) hold.

To prove (3), we define
\begin{equation}\label{3.5}
g(s):=G(s,s)=\frac{(s-a)^{\alpha -1}(b-s)^{\alpha-1-\beta}}{(b-a)^{\alpha-1-\beta}\Gamma(\alpha)}.
\end{equation}
Then $g(a)=g(b)=0$ and $g(s)>0$ on $(a,b)$. By Rolle's theorem,
there exists $s^{*}\in(a,b)$ such that $g(s^{*})=\max_{s\in[a,b]} g(s)$,
i.e., $g'(s^{*})=0$. Note that
\begin{align*}
g'(s)&=\frac{(\alpha-1)(s-a)^{\alpha-2}(b-s)^{\alpha-1-\beta}-(\alpha-1-\beta)(s-a)^{\alpha-1}(b-s)^{\alpha-2}}{(b-a)^{\alpha-1-\beta}\Gamma(\alpha)}\\
&=\frac{(s-a)^{\alpha-2}(b-s)^{\alpha-2-\beta}\left[(\alpha-1)(b-s)-(\alpha-1-\beta)(s-a)\right]}{(b-a)^{\alpha-1-\beta}\Gamma(\alpha)}.
\end{align*}
Hence $g'(s^*)=0$ when
$$
s^*=\frac{(\alpha-1)b+(\alpha-1-\beta)a}{2\alpha-2-\beta}.
$$
Notice
$$
s^*>\frac{(\alpha-1)a+(\alpha-1-\beta)a}{2\alpha-2-\beta}=a,
$$
and
$$
s^*<\frac{(\alpha-1)b+(\alpha-1-\beta)a}{2\alpha-2-\beta}<\frac{(\alpha-1)b+(\alpha-1-\beta)b}{2\alpha-2-\beta}=b,
$$
so $s^*$ is well-defined. Replacing $s^*$ in \eqref{3.5} we see that \eqref{3.4} holds.
\end{proof}

We remark here that if $\beta\in(\alpha-1,1]$, then properties (1) and (2) from Lemma \ref{green1} still hold. However, the function $g(s)$ defined in the proof has a singularity at $b$ when $\beta>\alpha-1$. Hence $G(s,s)$ does not have a maximum value, which is not surprising, since in this case, $G(s,s)$ is only defined for $s\in[a,b)$.

\begin{lemma}\label{l3.2}
Let $G(t,s)$ be given by \eqref{gf}. Then
\begin{equation}\label{int}
\int_a^bG(t,s)ds\le \frac{(\alpha-1)^{\alpha-1}}{(\alpha-\beta)^\alpha \Gamma(\alpha+1)}
(b-a)^\alpha.
\end{equation}
\end{lemma}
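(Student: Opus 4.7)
The plan is to compute the integral $\phi(t):=\int_a^b G(t,s)\,ds$ explicitly as a function of $t$, then locate its maximum over $[a,b]$ by elementary one-variable calculus.

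First, I would split the integral at $s=t$ using the piecewise definition of $G$. The first branch of $G(t,s)$ contributes $\int_a^t[(t-a)^{\alpha-1}(b-s)^{\alpha-1-\beta}/(b-a)^{\alpha-1-\beta} - (t-s)^{\alpha-1}]\,ds$, while the second branch contributes $\int_t^b (t-a)^{\alpha-1}(b-s)^{\alpha-1-\beta}/(b-a)^{\alpha-1-\beta}\,ds$. The two pieces of the first term combine into a single integral over $[a,b]$ of $(t-a)^{\alpha-1}(b-s)^{\alpha-1-\beta}/(b-a)^{\alpha-1-\beta}$, which evaluates in closed form using antiderivatives of $(b-s)^{\alpha-1-\beta}$ and of $(t-s)^{\alpha-1}$. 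After simplification (and using $\Gamma(\alpha+1)=\alpha\Gamma(\alpha)$), this should yield
\begin{equation*}
\phi(t)=\frac{(t-a)^{\alpha-1}(b-a)}{\Gamma(\alpha)(\alpha-\beta)}-\frac{(t-a)^{\alpha}}{\Gamma(\alpha+1)}.
\end{equation*}

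Next, I would differentiate and factor:
\begin{equation*}
\phi'(t)=\frac{(t-a)^{\alpha-2}}{\Gamma(\alpha)}\left[\frac{(\alpha-1)(b-a)}{\alpha-\beta}-(t-a)\right].
\end{equation*}
The unique critical point in $(a,b]$ is $t^{*}=a+\frac{(\alpha-1)(b-a)}{\alpha-\beta}$, which lies in $(a,b)$ because $\beta\in[0,\alpha-1]$ guarantees $0<\frac{\alpha-1}{\alpha-\beta}\le 1$. The bracket is positive for $t<t^{*}$ and negative for $t>t^{*}$, so $\phi$ attains its maximum at $t^{*}$ (and $\phi(a)=0$, confirming $t^{*}$ beats the left endpoint).

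Finally, I would substitute $t^{*}$ into $\phi$ and simplify. The two terms share the factor $(b-a)^{\alpha}(\alpha-1)^{\alpha-1}/[(\alpha-\beta)^{\alpha}\Gamma(\alpha)]$, and their coefficients combine as $1-\frac{\alpha-1}{\alpha}=\frac{1}{\alpha}$, producing exactly the right-hand side of \eqref{int}. There is no real obstacle here: the only things to be careful about are the bookkeeping of exponents (noting $(b-a)^{\alpha-\beta}/(b-a)^{\alpha-1-\beta}=b-a$) and verifying that $t^{*}\in(a,b]$ so that the formal maximum is actually attained inside the interval where $\phi$ is defined.
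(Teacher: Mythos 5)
Your proposal is correct and follows essentially the same route as the paper: compute $\int_a^b G(t,s)\,ds$ in closed form as a one-variable function of $t$, locate the unique critical point $t^{*}=a+\frac{(\alpha-1)(b-a)}{\alpha-\beta}$, and evaluate there to get the stated bound. The only cosmetic difference is that the paper justifies the maximum via Rolle's theorem on the extended interval $[a,c]$ with $c=a+\frac{\alpha}{\alpha-\beta}(b-a)\ge b$, whereas you read off the sign of $\phi'$ directly, which is equivalent.
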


\begin{proof}
When using the expression of $G(t,s)$ in \eqref{gf}, it follows that
\begin{align}\nonumber
\int_{a}^{b}G(t,s)ds&=\frac{1}{\Gamma(\alpha)}\left[\frac{(t-a)^{\alpha-1}}{(b-a)^{\alpha-1-\beta}}\int_{a}^{b}(b-s)^{\alpha-1-\beta}ds-\int_{a}^{t}(t-s)^{\alpha-1}ds\right]\\\nonumber
&=\frac{1}{\Gamma(\alpha)}\left[\frac{(t-a)^{\alpha-1}}{(b-a)^{\alpha-1-\beta}}\frac{(b-a)^{\alpha-\beta}}{\alpha-\beta}-\frac{(t-a)^\alpha}{\alpha}\right]\\
&=\frac{(t-a)^{\alpha-1}}{\Gamma(\alpha+1)}\left[\frac{\alpha}{\alpha-\beta}(b-a)-(t-a)\right]. \label{a1}
\end{align}
We denote
\begin{equation}\label{a2}
f(t):=\frac{(t-a)^{\alpha-1}}{\Gamma(\alpha+1)}\left[\frac{\alpha}{\alpha-\beta}(b-a)-(t-a)\right],\quad t\in[a,b].
\end{equation}
Let $c:=a+\frac{\alpha}{\alpha-\beta}(b-a)$.
Clearly, $f(a)=f(c)=0$, and $f(t)>0$ on $(a,c)$.
Since $\alpha-\beta\le\alpha$, we have $b\le c$ with the equality holding
only when $\beta=0$.
By Rolle's theorem, there exists $t^{*}\in(a,c)$ such that $f(t^{*})=\max_{t\in[a,c]} f(t)$,
i.e., $f'(t^{*})=0$. Note that
\begin{equation}\label{a3}
f'(t)=\frac{(t-a)^{\alpha-2}}{\Gamma(\alpha)}\left[\frac{\alpha-1}{\alpha-\beta}(b-a)-(t-a)\right].
\end{equation}
It is easy to see that $f'(t)=0$ only at
$t=t^{*}=a+\frac{\alpha-1}{\alpha-\beta}(b-a)$.
Again, $\alpha-1\le\alpha-\beta$ implies $t^*\le b$ with the equality holding
only when $\beta=1$.
Hence $f(t)$ has a unique maximum at $t^{*}\in[a,b]\subseteq[a,c]$ given by
\begin{equation*}
\max_{t\in[a,c]}f(t)=\max_{t\in[a,b]}f(t)=f(t^{*})=\frac{(\alpha-1)^{\alpha-1}}{(\alpha-\beta)^\alpha\Gamma(\alpha+1)}(b-a)^\alpha.
\end{equation*}
The proof is complete.
\end{proof}

Now we present a Lyapunov-type inequality for \eqref{1}, \eqref{2}.
\begin{theorem}\label{main1}
Assume \eqref{1} has a nontrivial solution $u$
satisfying \eqref{2} and $u(t)\neq0$ on $(a,b)$. Then
\begin{equation}\label{lti1}
\int_{a}^{b}q_{+}(t)dt> \Gamma(\alpha)\left(\frac{2\alpha-2-\beta}{(b-a)(\alpha-1)}\right)^{\alpha-1}\left(\frac{2\alpha-2-\beta}{\alpha-1-\beta}\right)^{\alpha-1-\beta}.
\end{equation}
\end{theorem}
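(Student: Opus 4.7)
The plan is to represent $u$ via the Green's function from Lemma~\ref{l3.1} and then bound it pointwise using the estimates proven in Lemma~\ref{green1}. First, applying Lemma~\ref{l3.1} with $h(t)=q(t)u(t)$, the solution $u$ of \eqref{1}, \eqref{2} satisfies the integral equation
\begin{equation*}
u(t)=\int_a^b G(t,s)\,q(s)\,u(s)\,ds,\qquad t\in[a,b].
\end{equation*}
Since $u$ is continuous on $[a,b]$ with $u(a)=0$ and $u(t)\ne 0$ on $(a,b)$, I may assume (replacing $u$ by $-u$ if necessary) that $u>0$ on $(a,b)$. Set $M:=\max_{t\in[a,b]}u(t)>0$ and let $t_0\in(a,b]$ be a point at which this maximum is attained.

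Next, using $G\ge 0$ (Lemma~\ref{green1}(1)) and the pointwise bound $q(s)u(s)\le q_+(s)u(s)$, which is valid because $u\ge 0$, I obtain
\begin{equation*}
M=u(t_0)=\int_a^b G(t_0,s)\,q(s)\,u(s)\,ds\le\int_a^b G(t_0,s)\,q_+(s)\,u(s)\,ds.
\end{equation*}
Combining parts (2) and (3) of Lemma~\ref{green1} with $u(s)\le M$ yields $G(t_0,s)u(s)\le G(s^*,s^*)M$, so
\begin{equation*}
M\le M\,G(s^*,s^*)\int_a^b q_+(s)\,ds.
\end{equation*}
Cancelling the positive factor $M$ and substituting the explicit value \eqref{3.4} of $G(s^*,s^*)$ rearranges algebraically to the non-strict form of \eqref{lti1}.

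The only delicate step is upgrading the last estimate to a strict inequality. If $q_+\equiv 0$ on $[a,b]$, the displayed bound $M\le\int_a^b G(t_0,s)q_+(s)u(s)\,ds$ forces $M\le 0$, contradicting $M>0$; hence the continuous function $q_+$ is strictly positive on some open subinterval $J\subset[a,b]$. Because Lemma~\ref{green1}(3) shows that $G(s,s)<G(s^*,s^*)$ for every $s\ne s^*$, we have $G(s,s)q_+(s)<G(s^*,s^*)q_+(s)$ on a set of positive measure in $J$, so $\int_a^b G(s,s)q_+(s)\,ds<G(s^*,s^*)\int_a^b q_+(s)\,ds$. This yields the strict inequality in \eqref{lti1}.
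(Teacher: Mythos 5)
Your proof is correct and follows essentially the same route as the paper: represent $u$ with the Green's function of Lemma \ref{l3.1}, bound $G(t,s)q(s)u(s)$ by $G(s^*,s^*)q_+(s)\max u$ via Lemma \ref{green1}, and cancel the maximum. The only difference is how strictness is obtained --- the paper appeals to $u\not\equiv \max u$, while you use $q_+\not\equiv 0$ together with the uniqueness of the maximizer $s^*$ of $G(s,s)$ --- which is a minor variation (and arguably a cleaner justification of the strict inequality).
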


\begin{proof}
Let $u$ be a solution of \eqref{1}, \eqref{2}. Then $u$ satisfies
$$u(t)=\int^b_a G(t,s)q(s)u(s)ds.$$
Without loss of generality, assume $u(t)>0$ on $(a,b)$.
Define $m=\max_{t\in[a,b]}u(t)$. Using Lemma \ref{green1} and the facts
that $0\le u(t)\le m$, $u(t)\not\equiv m$ on $[a,b]$, and $q(t)\le q_{+}(t)$,
we have
$$
m<m\max_{t\in[a,b]}\int_{a}^{b}G(t,s)q_{+}(s)ds\le m\int_{a}^{b}G(s,s)q_{+}(s)ds.
$$
Canceling $m$ from both sides and using Lemma \ref{green1} again, we see that
$$
1<\dfrac{1}{\Gamma(\alpha)}\left(\frac{(b-a)(\alpha-1)}{2\alpha-2-\beta}\right)^{\alpha-1}\left(\frac{\alpha-1-\beta}{2\alpha-2-\beta}\right)^{\alpha-1-\beta}\int_{a}^{b}q_{+}(t)dt,
$$
which gives the desired result.
\end{proof}

\begin{remark} Notice when $\beta=0$, we obtain the improved form
\eqref{1.6} which was
the result presented by Ferreira in \cite{F2} and later was noted by Dhar and Kong in
\cite{DK5}.
Also, by setting $\alpha=2$ and $\beta=0$, we obtain the classical Lyapunov inequality.
\end{remark}

\section{Application to Boundary Value Problems}

In the last section, we apply the obtained results in Section 3 to study the nonexistence, uniqueness, 
and existence-uniqueness of solutions of related fractional-order BVPs.
First, we provide a sufficient condition for the nonexistence of a nontrivial solution of the BVP \eqref{1}, \eqref{2}.

\begin{theorem}\label{t6.1}
Assume
\begin{equation}\label{5.3}
\int_{a}^{b}q_{+}(t)dt\le \Gamma(\alpha)\left(\frac{2\alpha-2-\beta}{(b-a)(\alpha-1)}\right)^{\alpha-1}\left(\frac{2\alpha-2-\beta}{\alpha-1-\beta}\right)^{\alpha-1-\beta}.
\end{equation}
Then \eqref{1}, \eqref{2} has no nontrivial solutions.

\end{theorem}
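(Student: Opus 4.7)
The plan is to prove Theorem \ref{t6.1} as an immediate contrapositive of Theorem \ref{main1}. Concretely, I would argue by contradiction: assume that the BVP \eqref{1}, \eqref{2} admits a nontrivial solution $u$. Then the hypotheses of Theorem \ref{main1} are met, so the strict Lyapunov-type inequality \eqref{lti1} must hold, namely
\[
\int_{a}^{b}q_{+}(t)\,dt> \Gamma(\alpha)\left(\frac{2\alpha-2-\beta}{(b-a)(\alpha-1)}\right)^{\alpha-1}\left(\frac{2\alpha-2-\beta}{\alpha-1-\beta}\right)^{\alpha-1-\beta}.
\]
This directly contradicts the assumed bound \eqref{5.3}. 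Hence no nontrivial solution can exist.

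The main technical point to verify is that Theorem \ref{main1} requires the additional condition $u(t)\neq 0$ on $(a,b)$, whereas Theorem \ref{t6.1} asserts nonexistence of \emph{any} nontrivial solution. I would address this by noting that the argument in the proof of Theorem \ref{main1} really only uses the representation $u(t)=\int_a^b G(t,s)q(s)u(s)\,ds$ together with the bound $|u(t)|\le m:=\max_{[a,b]}|u|$, and that $m>0$ whenever $u$ is nontrivial. Taking absolute values in the integral representation and using parts (1) and (2) of Lemma \ref{green1}, one still obtains
\[
m=|u(t^\circ)|\le m\int_a^b G(s,s)q_+(s)\,ds
\]
at a point $t^\circ$ where $|u|$ attains its maximum, so part (3) of Lemma \ref{green1} yields the strict inequality exactly as in Theorem \ref{main1}. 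The strictness comes, as before, from the fact that $|u(t)|\not\equiv m$ on $[a,b]$ (since $u(a)=0$).

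Thus the actual proof reduces to a one-line contrapositive of Theorem \ref{main1}, and I do not anticipate any substantive obstacle; the only care needed is the cosmetic remark above about the one-sign hypothesis. I would write the proof as a short paragraph invoking Theorem \ref{main1} and deriving the contradiction with \eqref{5.3}.
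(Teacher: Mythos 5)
Your main line of argument---assume a nontrivial solution exists, invoke Theorem \ref{main1} to get the strict inequality \eqref{lti1}, and contradict \eqref{5.3}---is exactly the paper's proof, which is a three-line contradiction argument. You also correctly notice something the paper silently glosses over: Theorem \ref{main1} carries the extra hypothesis $u(t)\neq 0$ on $(a,b)$, while Theorem \ref{t6.1} claims nonexistence of \emph{every} nontrivial solution. Identifying that mismatch is a genuine contribution.

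However, your proposed repair does not work. If $u$ is allowed to change sign, then taking absolute values in $u(t^\circ)=\int_a^b G(t^\circ,s)q(s)u(s)\,ds$ gives $|u(t^\circ)|\le \int_a^b G(t^\circ,s)\,|q(s)|\,|u(s)|\,ds$, with $|q|$ and not $q_+$. The pointwise bound $q(s)u(s)\le q_+(s)|u(s)|$ fails precisely where $q(s)<0$ and $u(s)<0$: there $q(s)u(s)>0$ while $q_+(s)|u(s)|=0$. The whole reason the Wintner/Dhar--Kong refinement replaces $|q|$ by $q_+$ is that the solution is assumed to be of one sign on $(a,b)$, so that $q(s)u(s)\le q_+(s)u(s)$ holds everywhere; dropping that hypothesis and keeping $q_+$ is not a cosmetic adjustment. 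To make your patch rigorous you would either have to settle for the weaker conclusion with $\int_a^b|q(t)|\,dt$ in \eqref{5.3}, or give a separate argument (e.g.\ restricting to a maximal subinterval where $u$ has one sign, which is delicate here because the right-hand condition is $\Db u(b)=0$ rather than a pointwise zero). As written, your ``one still obtains $m\le m\int_a^b G(s,s)q_+(s)\,ds$'' step is unjustified.
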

\begin{proof}
Assume the contrary, i.e., BVP \eqref{1}, \eqref{2} has a nontrivial solution
$u$. Then by Theorem \ref{main1}, \eqref{lti1} holds. This contradicts assumption
\eqref{5.3}.
\end{proof}

Now we consider a nonlinear fractional BVP consisting of the equation
\begin{equation}
\Da u+f(t,u)=0,\label{5.5}
\end{equation}
together with the BCs \eqref{2},
where $\alpha\in(1,2]$, $\beta\in[0,\alpha-1]$.
Here we present a criterion for the existence of a unique solution for
BVP \eqref{5.5}, \eqref{2}.

\begin{theorem}\label{t5.1}
Assume $f:[a,b]\times\mathbb{R}\to\mathbb{R}$ is continuous
and satisfies a uniform Lipschitz condition with respect to the second
variable on $[a,b]\times\mathbb{R}$ with Lipschitz constant $K$;
that is
\begin{equation}\label{5.7}
|f(t,u_{1})-f(t,u_{2})|\le K|u_{1}-u_{2}|,
\end{equation}
for all $(t,u_{1}),(t,u_{2})\in[a,b]\times\mathbb{R}$. If
\begin{equation}\label{5.8}
b-a<\left[\frac{(\alpha-\beta)^\alpha\Gamma(\alpha+1)}{K(\alpha-1)^{\alpha-1}}\right]^{\frac{1}{\alpha}},
\end{equation}
then BVP \eqref{5.5}, \eqref{2} has a unique solution on $[a,b]$.
\end{theorem}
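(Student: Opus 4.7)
The plan is to recast the BVP \eqref{5.5}, \eqref{2} as a fixed-point equation and apply the Banach contraction mapping theorem on $C([a,b],\mathbb{R})$ equipped with the supremum norm $\|u\|=\max_{t\in[a,b]}|u(t)|$. Since Lemma \ref{l3.1} was established for the linear nonhomogeneous equation $\Da u + h(t) = 0$ with $h\in C([a,b],\mathbb{R})$, I would first observe that by continuity of $f$ the function $h(t):=f(t,u(t))$ lies in $C([a,b],\mathbb{R})$ for every $u\in C([a,b],\mathbb{R})$; thus $u$ solves the nonlinear BVP \eqref{5.5}, \eqref{2} if and only if $u$ is a fixed point of the operator
\begin{equation*}
(Tu)(t):=\int_a^b G(t,s)\,f(s,u(s))\,ds,\qquad t\in[a,b],
\end{equation*}
where $G(t,s)$ is the Green's function from \eqref{gf}.

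Next, I would show $T$ maps $C([a,b],\mathbb{R})$ into itself (continuity of $Tu$ follows from continuity of $G$ and $f$) and then estimate the Lipschitz constant of $T$. Given $u_1,u_2\in C([a,b],\mathbb{R})$, the nonnegativity of $G$ (Lemma \ref{green1}, part (1)) together with the Lipschitz hypothesis \eqref{5.7} gives
\begin{equation*}
|(Tu_1)(t)-(Tu_2)(t)|\le \int_a^b G(t,s)\,|f(s,u_1(s))-f(s,u_2(s))|\,ds\le K\|u_1-u_2\|\int_a^b G(t,s)\,ds.
\end{equation*}
Applying Lemma \ref{l3.2} to control the integral uniformly in $t$, and then taking the supremum over $t\in[a,b]$, I obtain
\begin{equation*}
\|Tu_1-Tu_2\|\le K\,\frac{(\alpha-1)^{\alpha-1}}{(\alpha-\beta)^\alpha\,\Gamma(\alpha+1)}(b-a)^\alpha\,\|u_1-u_2\|.
\end{equation*}

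The hypothesis \eqref{5.8} is precisely the condition that the coefficient on the right is strictly less than $1$, so $T$ is a contraction on the Banach space $(C([a,b],\mathbb{R}),\|\cdot\|)$. The Banach contraction mapping theorem then produces a unique fixed point of $T$, which is the unique solution of BVP \eqref{5.5}, \eqref{2}. There is no real obstacle here beyond carefully invoking Lemmas \ref{green1} and \ref{l3.2}; the main content of the argument has already been done in establishing the sharp bound \eqref{int}, which is what converts the size condition on $b-a$ directly into the contraction constant.
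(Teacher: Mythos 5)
Your proposal is correct and follows essentially the same route as the paper: recast the BVP as a fixed-point problem for $Tu(t)=\int_a^b G(t,s)f(s,u(s))\,ds$, use the nonnegativity of $G$ together with the Lipschitz condition \eqref{5.7} and the bound \eqref{int} from Lemma \ref{l3.2} to show $T$ is a contraction under \eqref{5.8}, and invoke the contraction mapping theorem. Your added remarks (that $h(t)=f(t,u(t))$ is continuous so Lemma \ref{l3.1} applies, and that $T$ maps $C([a,b],\mathbb{R})$ into itself) are fine points the paper glosses over, but the argument is the same.
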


\begin{proof}
Let $\mathcal{B}$ be the Banach space of continuous functions defined
on $[a,b]$ with the norm
$$
||u||=\max_{t\in[a.b]}|u(t)|.
$$
Now $u(t)$ is a solution of BVP \eqref{5.5} if and only if $u(t)$
satisfies the integral equation
$$
u(t)=\int_a^bG(t,s)f(s,u(s))ds.
$$
Define the operator $T:\mathcal{B}\to\mathcal{B}$ by
$$
Tu(t)=\int_a^bG(t,s)f(s,u(s))ds.
$$
Then $T$ is completely continuous. We claim that $T$ has a unique fixed point in
$\mathcal{B}$. In fact, for any $u_1,u_2\in\mathcal{B}$, we have
$$
|Tu_1(t)-Tu_2(t)|\le\int_a^b|G(t,s)||f(s,u_1(s)-f(s,u_2(s)))|ds.
$$
Since $G(t,s)\ge 0$ on $[a,b]\times[a,b]$ and $f$ satisfies \eqref{5.7}, we have
\begin{eqnarray}\nonumber
|Tu_1(t)-Tu_2(t)|&\le& K\int_a^bG(t,s)|u_1(s)-u_2(s)|ds\\
&\le&K||u_1-u_2||\int_a^bG(t,s)ds.\label{5.9}
\end{eqnarray}
From Lemma \ref{l3.2}, it follows that
$$
|Tu_1(t)-Tu_2(t)|\le K\frac{(\alpha-1)^{\alpha-1}}{(\alpha-\beta)^\alpha \Gamma(\alpha+1)}
(b-a)^{\alpha}||u_1-u_2||<||u_1-u_2||,
$$
where we have used \eqref{5.8}. Hence $T$ is a contraction mapping on
$\mathcal{B}$. By the contraction mapping theorem, we obtain the desired
result.

\end{proof}

\begin{remark}
It is easy to see that the results in Theorem \ref{t5.1} can be extended to
a nonlinear fractional BVP consisting of the equation \eqref{5.5} and
the following nonhomogeneous BC:
$$
u(a)=0,\quad \Db u(b)=k,
$$
where $k\in\mathbb{R}$. We leave the details to the interested reader.
\end{remark}

\section{Conclusion}

In this paper, we obtained a Lyapunov-type inequality for a fractional differential equation with a fractional boundary condition. The inequality obtained is an improvement and a generalization of inequalities that have been obtained in the past. The inequality was applied to show the existence and nonexistence of solutions to a nonlinear fractional boundary value problem.

\end{document}